\documentclass[12pt,a4wide,reqno]{amsart}
\usepackage{amsfonts}
\title{
A Groshev Theorem for small linear forms  
}
\newtheorem{lem}{Lemma}
\newtheorem{cor}{Corollary}
\begin{document}

\newcommand{\KG}{Khintchine-Groshev} 
\newcommand{\K}{Khintchine} 
\newcommand{\W}{\widehat{W}}
\newcommand{\Q}{{\mathbb{Q}}}
\newcommand{\q}{\mathbf{q}}
\newcommand{\x}{\mathbf{x}}
\newcommand{\ip}{\cdot}

\newcommand{\hq}{\widehat{\mathbf{q}}}
\newcommand{\hx}{\widehat{\mathbf{x}}} 
\newcommand{\Vp}{V(\psi)}
\newcommand{\tp}{\widetilde{\psi}}
\newcommand{\R}{{\mathbb{R}}}
\newcommand{\T}{{\mathbb{T}}}
\newcommand{\N}{{\mathbb{N}}}
\newcommand{\Z}{{\mathbb{Z}}}

\subjclass{11J13}
\address{Department of Mathematics\\University of York\\Heslington\\York
YO10 5DD}
\thanks{I thank my supervisor Maurice Dodson for suggesting this problem
and for his help and encouragement, and Sanju Velani and Simon
Kristensen for their helpful comments.  I am also grateful to the EPSRC
for financial support.}
\author{R.~S.~Kemble}

\maketitle

\section{Introduction}
In this paper the absolute value or distance from the origin analogue
of the classical \KG{} theorem~\cite{Sprindzuk} is established for a
single linear form with a `slowly decreasing' error function.  To
explain this in more detail, some notation is introduced.  Throughout
this paper, $m,n$ are positive integers i.e., $m,n\in \N$,
$\x=(x_1,\dots,x_n)$ will denote a point or vector in $\R^n$,
$\q=(q_1,\dots,q_n)$ will denote a non-zero vector in $\Z^n$ and
$$|\x|:=\max\{\vert x_1\vert, \dots, \vert x_n\vert \}=\Vert \x
\Vert_\infty
$$
will denote the height of the vector $\x$.  Let $\psi:\mathbb{N}\to
(0,\infty)$ be a (non-zero) function which converges to 0 at $\infty$.
The notion of a \emph{slowly decreasing function} $\psi$ is defined
in~\cite{Patterson} as a function for which given $c\in (0,1)$, there
exists a $K=K(c)>1$ such that $ \psi( ck) \le K \psi(k)$.  Of course
since $\psi$ is decreasing, $\psi(k)\le \psi(ck)$.  For any set $X$,
$\vert X \vert_n$ will denote the $n$-dimesnional Lesbegue measure of
$X$ (the suffix $n$ will usually be omitted; there should be no
confusion with the height of a vector).

\newtheorem{thm}{Theorem}

For convenience, the \KG{} theorem will be stated for a measurable set
$U\subseteq [0,1]^n$, where $n\in\N$.  The distance of a real number
$x$ from the integers $\Z$ is denoted by $\Vert x \Vert = \min
\{x-[x], \vert x-[x+1]\vert\}$, where $[x]$ is the integer part of
$x$. Let $W(U;\psi) = W([0,1]^{m\times n}, U;\psi)$ be the set of $\x\in
U$ such that the system of inequalities in $m$ linear forms in $n$
variables given by
  \begin{align*}
   \| q_{1}x_{11} + \dots + q_{n} x_{1n}\|& < \psi(|\q|)\\
 & \vdots \\
   \| q_{1}x_{m1} + \dots + q_{n} x_{mn}\|& < \psi(|\q|)
 \end{align*}
has infinitely many solutions in $\q\in\Z^n$. The \KG{} theorem states
that the
Lesbegue measure $\vert W(\psi) \vert$ depends on the convergence of a
sum involving $\psi(k), k \in \N$, as follows.
\begin{thm}
\label{thm:KG}
  The set $W(U;\psi), m,n \in \N$ has measure 
  \begin{equation*}
\vert W(U;\psi) \vert = \begin{cases}
    0 & \mbox{if } \displaystyle\sum_{k=1}^\infty k^{n-1} \psi^m(k) \mbox{ converges}, \\
    \vert U \vert & \mbox{if } \displaystyle\sum_{k=1}^\infty k^{n-1} \psi^m(k)
   \text{ diverges and $\psi(k)$ is decreasing} \\
 &\text{                 for $n=1$ or $2$}.
\end{cases}
  \end{equation*}
\end{thm}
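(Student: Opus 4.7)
The first half is a direct application of the Borel-Cantelli lemma. For each non-zero $\q\in\Z^n$ I would set
$$A_\q=\{\x\in[0,1]^{m\times n}:\|\q\ip\x_i\|<\psi(|\q|)\text{ for }i=1,\dots,m\},$$
where $\x_i$ denotes the $i$th row of $\x$, and note that since a single linear form $\q\ip\x_i$ pushes Lebesgue measure to uniform measure on $\T$, Fubini applied one row at a time gives $|A_\q|\le(2\psi(|\q|))^m$. Since $\#\{\q\in\Z^n:|\q|=k\}\ll k^{n-1}$, this yields
$$\sum_{\q\ne 0}|A_\q|\ll\sum_{k=1}^\infty k^{n-1}\psi^m(k)<\infty,$$
and Borel-Cantelli gives $|W(U;\psi)|\le|W([0,1]^{m\times n};\psi)|=0$.

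\textbf{Divergence.} Here I would first reduce to the case $U=[0,1]^{m\times n}$: the set $W([0,1]^{m\times n};\psi)$ is invariant under translation by $\Z^{m\times n}$ and so descends to a measurable subset of the torus, to which a zero-one law applies, forcing its measure to be $0$ or $1$; intersecting with a general $U$ then produces the desired value $|U|$. It therefore suffices to show $|W([0,1]^{m\times n};\psi)|>0$. The natural tool is the divergence half of Borel-Cantelli in its quasi-independent (Chung-Erd\H{o}s) form, which requires two inputs: first, the divergence of $\sum_\q|A_\q|$, which follows from the matching lower bound $|A_\q|\gg\psi^m(|\q|)$ (valid once $\psi(|\q|)$ is small) together with the hypothesis that $\sum k^{n-1}\psi^m(k)=\infty$; and second, the quasi-independence estimate $|A_\q\cap A_{\q'}|\ll|A_\q|\,|A_{\q'}|$ for $\q\ne\pm\q'$ in suitable ranges.

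\textbf{Main obstacle.} The essential difficulty lies in the quasi-independence estimate. Because $A_\q$ factors as a product across the $m$ rows, it reduces to bounding, for $m=1$, the measure of the intersection of the two sets cut out by the linear forms $\q\ip\x$ and $\q'\ip\x$ in the unit cube $[0,1]^n$. For $n=1$ these sets are unions of intervals of spacing $1/|q|$ and $1/|q'|$, and the bound is the classical content of Khintchine's theorem, where monotonicity of $\psi$ is indispensable for controlling the error when $|q|$ and $|q'|$ are comparable. For $n=2$ the two conditions carve out two families of parallel strips in the unit square whose overlap forms a lattice of small parallelograms of density controlled by the determinant $|q_1q_2'-q_2q_1'|$; a dyadic summation over $\q'$ with the aid of the monotonicity of $\psi$ then delivers the required estimate. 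The breakdown of this determinantal/geometric argument for $n\ge 3$ (absent a monotonicity-free replacement) is exactly what forces the restriction to $n\in\{1,2\}$ in the statement.
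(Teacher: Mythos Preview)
The paper does not actually prove this theorem: it is the classical Khintchine--Groshev theorem, quoted with a reference to Sprind\v{z}uk and thereafter used as a black box in the divergence half of Theorem~2. There is thus no in-paper argument to compare your sketch against; your convergence paragraph is the standard Borel--Cantelli computation and is correct, and the overall architecture of your divergence paragraph (zero--one law reducing to positive measure, then Chung--Erd\H{o}s via quasi-independence of the $A_\q$) is also the standard one.

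Where your proposal goes wrong is in the reading of the clause ``$\psi(k)$ is decreasing for $n=1$ or $2$''. This does not restrict the divergence conclusion to $n\le 2$; it says that the extra hypothesis of monotonicity is imposed only when $n\le 2$, and may be dropped for $n\ge 3$. Indeed the paper immediately applies the divergence case in dimension $n-1$ for every $n\ge 2$, so a proof confined to $n\in\{1,2\}$ would not suffice for its purposes. Your diagnosis of the ``main obstacle'' is correspondingly inverted: the quasi-independence bound $|A_\q\cap A_{\q'}|\ll|A_\q|\,|A_{\q'}|$ does not become harder as $n$ grows, it becomes easier, because for linearly independent $\q,\q'$ the two hyperplane neighbourhoods in $[0,1]^n$ are transverse and their overlap measure factorises up to a constant depending only on $n$. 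Monotonicity is a genuine necessity only in the lowest-dimensional cases (the Duffin--Schaeffer counterexample at $m=n=1$ being the prototype), precisely because there is not yet enough transversality for this averaging to work unaided; the clause in the statement records where the hypothesis can be \emph{dropped}, not where the theorem \emph{fails}.
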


We note that if we consider $C\psi$ for some positive constant $C$, the
conclusion does not change, since the constant does not affect the
convergence of the sum $\sum_{k=1}^\infty k^{n-1} C^m\psi^m(k)$.

Let the set of points $\x\in\R^n$ for which the linear form
\begin{equation*}
\q\ip\x   = q_{1}x_1 + \dots + q_{n} x_n  
\end{equation*}
 satisfies
\begin{equation}
\label{eq:1}
\vert   \q\ip\x\vert=\vert q_1x_1+\dots+q_nx_n\vert <\psi(\vert \q\vert)
\end{equation}
for infinitely many $\q\in\Z^n$ be denoted by $V(\R^n,\psi)$.  The set
$V(\R^n,\psi)$ is the absolute value analogue of the \KG{} theorem for
a single linear form. It will be shown that the Lebesgue measure of
$V(\R^n,\psi)$
depends on the convergence or divergence of the sum
\begin{equation}
 \label{eq:critsum}
  \sum_{k=1}^\infty k^{n-2}\psi(k).
\end{equation}
For convenience, the subset
\begin{align*}
V([0,1]^n;\psi) &= V(\R^n;\psi)\cap [0,1]^n \\
&=\{ \x \in [0,1]^n \colon \vert
\mathbf{q} \ip 
\mathbf{x} \vert < \psi (\vert \q \vert )\ \mbox{for infinitely many} \ \q \}
\end{align*}
is considered and will be called simply $V(\psi)$.  The
analogue of the \KG{} theorem for $V(\psi)$ is now
stated.  Since $V(\psi)=\{0\}$ when $n=1$, from now on take $n\ge 2$.

\begin{thm}
For $n\ge 2$,
  $$
  \vert V(\psi) \vert = \begin{cases}
    0 & \mbox{if } \displaystyle\sum_{k=1}^\infty k^{n-2} \psi(k) \mbox{ converges}, \\
    1 & \mbox{if } \displaystyle\sum_{k=1}^\infty k^{n-2} \psi(k)
    \mbox{ diverges and $\psi(k)$ is slowly decreasing}.
\end{cases}
$$
\end{thm}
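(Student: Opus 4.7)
Write $V(\psi)=\limsup_{|\q|\to\infty}A_\q$ where
\[
A_\q := \{\x\in[0,1]^n : |\q\ip\x|<\psi(|\q|)\},
\]
so that the task splits into $|\limsup A_\q|=0$ in the convergence case and $|\limsup A_\q|=1$ in the divergence case.

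For the convergence case, the plan is to apply the first Borel--Cantelli lemma. The only ingredient needed is the bound $|A_\q|\le 2\psi(|\q|)/|\q|$, obtained by Fubini in the coordinate $x_j$ where $|q_j|=|\q|$: for fixed values of the remaining coordinates, the set of admissible $x_j$ is an interval of length at most $2\psi(|\q|)/|q_j|$. Since $\#\{\q\in\Z^n : |\q|=k\}=O(k^{n-1})$, the tail of $\sum_\q |A_\q|$ is controlled by $\sum_k k^{n-2}\psi(k)$, whose convergence forces $|V(\psi)|=0$.

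For the divergence case, I would combine a divergence-type Borel--Cantelli lemma (Chung--Erd\H{o}s, or Sprindzuk's variant) with a density argument. A matching lower bound $|A_\q|\gg \psi(|\q|)/|\q|$ for a positive proportion of $\q$ gives $\sum_\q |A_\q|=\infty$. The heart of the matter is a quasi-independence estimate: for linearly independent $\mathbf{p},\q\in\Z^n$, the set $A_\mathbf{p}\cap A_\q$ is the intersection of two transverse slabs in the unit cube, and a direct computation yields
\[
|A_\mathbf{p}\cap A_\q|\ll \frac{\psi(|\mathbf{p}|)\,\psi(|\q|)}{\max_{i,j}|p_iq_j-p_jq_i|}.
\]
A dyadic decomposition on the heights $|\mathbf{p}|,|\q|$ shows that at scales $|\mathbf{p}|\asymp 2^r,\ |\q|\asymp 2^s$ the wedge product is typically of order $2^{r+s}$, so on average $|A_\mathbf{p}\cap A_\q|\ll |A_\mathbf{p}|\,|A_\q|$. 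Chung--Erd\H{o}s then gives $|V(\psi)\cap B|>0$ for every open ball $B\subseteq[0,1]^n$ (by running the same argument localised to $B$), and the Lebesgue density theorem upgrades this to $|V(\psi)|=1$.

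The principal obstacle is the quasi-independence bound for (nearly) proportional pairs $\q=\lambda\mathbf{p}$, where the wedge product vanishes or degenerates and the slab intersection can be as large as $|A_\q|$. The contribution of such pairs sums like $\sum_\lambda \psi(\lambda|\mathbf{p}|)/(\lambda|\mathbf{p}|)$, and this is where the \emph{slow-decrease} hypothesis enters essentially: it allows $\psi(\lambda k)$ to be compared with $\psi(k)$ up to a constant depending only on the dyadic ratio, so that the proportional-pair contribution can be absorbed into the main term. Without slow-decrease no such comparison is available and the second-moment argument collapses.
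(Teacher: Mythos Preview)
Your convergence argument matches the paper's. For divergence the paper takes a completely different route: it never attempts a second-moment estimate. Instead it exploits two structural features of $V(\psi)$. First, $V(\psi)$ is star-shaped about the origin: if $\x\in V(\psi)$ then $t\x\in V(\psi)$ for every $t\in[0,1]$. Second, on the face $F_n=\{x_n=1\}$ the inequality $|\q\ip\x|<\psi(|\q|)$ reads $|\hq\ip\hx+q_n|<\psi(|\q|)$, which is a \emph{nearest-integer} inequality $\|\hq\ip\hx\|<\psi(|\q|)$ in dimension $n-1$. The paper shows $W([0,1]^{n-1};\psi/C)\times\{1\}\subseteq V(\psi)\cap F_n$, and \emph{this} is where slow decrease is actually used: when $|\q|=|q_n|>|\hq|$ one checks that $|\hq|>|\q|/n$, whence $\psi(|\hq|)\le\psi(|\q|/n)\le C\psi(|\q|)$. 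Theorem~\ref{thm:KG} then gives full $(n-1)$-measure on each face; integrating radially over the pyramid $P_j$ with apex $0$ and base $F_j$ yields $|V(\psi)\cap P_j|\ge 1/n$, and summing over $j$ gives $|V(\psi)|=1$. Thus the whole divergence case is reduced to the already-proved Groshev theorem, with no variance computation at all.

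Your Chung--Erd\H{o}s plan is a legitimate alternative strategy, but the sketch has real gaps. All the slabs $A_\q$ pass through the origin, so pairwise overlaps are systematically larger than in the classical $\|\cdot\|$ situation; the contribution from pairs with small but non-zero wedge product (not only the exactly proportional ones) must be bounded explicitly, and for $n=2$ this produces a logarithmic loss you have not addressed. Your account of where slow decrease enters also looks wrong: slow decrease says $\psi(k)\le K\psi(\lambda k)$ for fixed $\lambda>1$, i.e.\ it gives a \emph{lower} bound on $\psi(\lambda|\mathbf p|)$, whereas absorbing a proportional-pair sum $\sum_\lambda \psi(\lambda|\mathbf p|)/(\lambda|\mathbf p|)$ into the main term would call for an upper bound --- monotonicity alone already gives $\psi(\lambda k)\le\psi(k)$, and slow decrease adds nothing in that direction. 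Finally, the localisation to a ball $B$ away from the origin is not routine: only those $\q$ nearly orthogonal to the centre of $B$ meet $B$ at all, so both moment sums must be redone over this much thinner family before Lebesgue density can be invoked. None of this is obviously fatal, but none of it is handled; the paper's face-and-pyramid reduction sidesteps every one of these difficulties.
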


\section{Proof of theorem}
\label{sec:pot}

The set
\begin{equation}
  \label{eq:Bd}
B_\delta (\q) = \{ \x \in [0,1]^n : \vert\q
\ip \x\vert < \delta\}  
\end{equation}
is a neighbourhood of the resonant set
\begin{equation}
  \label{eq:reset}
  R(\q)= \{ \x \in [0,1]^n : \q \ip \x =0\}
\end{equation}
and is a `thickened' hyperplane or parallelepiped of volume
\begin{equation}
\label{eq:ball}
\vert B_\delta(\q)\vert \leq 2\delta / \vert \q \vert.  
\end{equation}
The error arises when two or more coordinates of $\q$ are equal
to the height of $\q$.  This error, when it
occurs, is negative and therefore has no consequence in any questions
of convergence.  It is readily verified the set $V(\psi)$ can be
expressed in the following `limsup' form:
\begin{equation}
  \label{eq:limsup}
V(\psi) 
= \bigcap_{N=1}^{\infty} \bigcup_{k=N}^{\infty}
\bigcup_{\vert \q\vert =k} B_{\psi(\vert \q \vert)}(\q) 
\subseteq \bigcup_{q=N}^{\infty} B_{\psi(\vert \q \vert)}(\q)  .
\end{equation}
The proof falls into two parts, depending on whether the sum
\eqref{eq:critsum} converges or diverges.

\subsection{Convergence case }

Suppose the sum \eqref{eq:critsum} converges.  
Replacing $\delta$ with $ \psi (\vert \q \vert)$ and summing over all
non zero $\q$, we have by~\eqref{eq:ball},
\begin{align*}
\sum_{\q} \vert B_{\psi(\vert \q \vert)}(\q) \vert &\leq \sum_{\q} \frac{2 \psi (\vert \q \vert)}{\vert \q \vert} =
2\sum_{k=1}^{\infty} \sum_{\vert \q \vert = k} \frac {\psi(\vert \q
\vert)}{\vert \q \vert} \\
&= 2 \sum_{k=1}^{\infty} \frac{\psi (k)}{k} \sum_{\vert \q \vert =k} 1
\\
&= 2^{n+1}n \sum_{k=1}^{\infty} \psi (k) k^{n-2}+O(\sum_{k=1}^{\infty} \psi (k) k^{n-3}).
\end{align*}

A Borel-Cantelli Lemma argument is now applied to
\eqref{eq:limsup} to complete
the proof of the convergence case.  
By \eqref{eq:limsup}, for each $N=1,2,\dots,$
\begin{align*}
\vert V(\psi) \vert &\leq \sum_{k=N}^{\infty} \sum_{\vert \q \vert =k}
\vert B_{\psi(k)}(\q) \vert \ll \sum_{k=N}^{\infty} \sum_{\vert \q \vert
=k} \frac{\psi(\vert \q \vert)}{\vert \q \vert} \\
& \ll \sum_{k=N}^{\infty} \psi(k)k^{n-2}
\end{align*}
by \eqref{eq:ball}. 
But the sum~\eqref{eq:critsum} converges, whence the tail tends to 0
as $N\to \infty$, and so $\vert V(\psi) \vert = 0$.

It is readily verified that the argument used here extends to systems
of $m$ linear forms so that if the sum $\sum_{k=1}^\infty
k^{n-2}\psi^m(k)$ converges, the set of points in $\R^{m\times n}$ for
which the inequalities  

\begin{align*}
   | q_{1}x_{11} + \dots + q_n x_{1n}|& < \psi(|\q|)\\
 & \vdots \\
   | q_{1}x_{m1} + \dots + q_{n} x_{mn}|& < \psi(|\q|)
\end{align*}
have infinitely many solutions in $\q\in\Z^n$ is of measure zero.

\subsection{Divergence case }
Now assume that the sum~\eqref{eq:critsum} diverges. Since $\psi$ is slowly decreasing, there
exists a constant $C>1$ such that for all $k\in \N$,
\begin{equation}
  \label{eq:sldecr}
 \psi(k/n) \leq C\psi(k).   
\end{equation}

Define the set $F_j=\{\x \in [0,1]^n : x_j =1\}$, which
will be referred to as the $j$th face of the hypercube $[0,1]^n$.  For
points in the set $V(\psi) \cap F_n$, the inequality~\eqref{eq:1}
reduces to
\begin{equation}
\label{eq:face}
\vert q_1 x_1 + q_2x_2 + \dots + q_{n-1} x_{n-1} + q_n \vert 
< \psi ( \vert \q \vert )  .
\end{equation}

It is shown that $\vert V(\psi)\cap F_n \vert = 1$ by proving
$$
W(\R^{1\times (n-1)},[0,1]^{n-1};\psi/C) \times \{1\} \subseteq V(\psi) \cap F_n.
$$  
Then the sum
\begin{equation*}
  \sum_{k=1}^\infty K\psi(k)k^{n-2}=\infty
\end{equation*}
for any constant $K>0$.
By the
$(n-1)$-dimensional Groshev theorem, the $(n-1)$ dimensional Lesbegue
measure is given by
$$
\vert W([0,1]^{1\times (n-1)}, [0,1]^{n-1};\psi/C) \vert =1.
$$

First we consider the case $j=n$.  To discuss the validity of \eqref{eq:face}, let $\hq = (q_1,
\dots, q_{n-1})$ and define $\hx$ similarly. Note that $|\q|\ge
|\hq|$.  
Now let $\hx \in W([0,1]^{1\times (n-1)},[0,1]^{n-1};\psi/C).$  Then by definition
there exist infinitely many $\hq = (q_1, \dots, q_{n-1}) \in \Z^{n-1}$ such that
\begin{equation*}
\left\| \hq \ip \hx \right\| < \frac1{C}\psi ( \vert \hq \vert ), 
\end{equation*}
that is, such that
\begin{equation*} 
\vert q_1x_1 + \dots +q_{n-1}x_{n-1} + q_n \vert 
< \frac1{C}\psi ( \vert \hq \vert )
\end{equation*}
for infinitely many $\hq \in \Z^{n-1}, q_n\in \Z$.  Now suppose $\vert \hq \vert = \vert \q \vert$ for infinitely many
$\q$.  Then since $C>1$, it is clear that $(\hx,1) \in V(\psi) \cap F_n.$
Otherwise, suppose $\vert \hq \vert = \vert \q \vert$ for only
finitely many $\q.$

Then $\vert \q \vert > \vert \hq \vert$ for all $\q$ with $\vert \q
\vert$ sufficiently large. It follows that $|\q| =|q_n|$ for all $\q$
with $\vert \q \vert$ sufficiently large. Moreover there exists a $q_n
\in \Z$ such that,
\begin{equation*}
  \vert q_1x_1 + \dots +q_{n-1}x_{n-1} + q_n \vert < 1,
\end{equation*}
i.e.,
\begin{equation*}
\vert \q\ip\x \vert < 1,
\end{equation*}
where $\x=(x_1,\dots,x_{n-1}, x_n)$.  
Thus there exists $j, 1 \leq j \leq n-1$ such that
\begin{equation}
  \label{eq:ineq}
\vert \hq \vert \geq \vert q_j \vert > \frac{\vert \q \vert}{n}
= \frac{\vert q_n \vert}{n}.  
\end{equation}
 For if not, then for each $j=1,\dots,n-1$,  $|q_j|\le |\q|/n=|q_n|/n$ and so  
\begin{equation*}
\vert \hq \ip \hx \vert=\vert q_1x_1+\dots+q_{n-1}x_{n-1} \vert\le
\frac{(n-1)|q_n| }{n}, 
\end{equation*}
whence
\begin{equation*}
 \vert \hq \ip \hx+q_n \vert \ge  \vert q_n \vert - \vert \hq \ip \hx
 \vert \ge  \vert q_n \vert -\left(1-\frac1{n}\right)|q_n|=\frac{|q_n|}{n}>1.
\end{equation*}
Therefore
$$
\vert \q\ip\x \vert = 
\vert \hq \ip \hx + q_n \vert< \frac1{C}\psi ( \vert \hq \vert
) \leq \frac1{C} C\psi(\frac{\vert \q \vert}{n}) < \psi(\vert \q \vert)
$$
by~\eqref{eq:sldecr} for infinitely many $\q$, so $\x \in
V(\psi) \cap F_n,$ and 
\begin{equation*}
W([0,1]^{(n-1)},[0,1]^{n-1};\psi/C) \times \{1\}
\subseteq V(\psi) \cap F_n.  
\end{equation*}
The argument for $F_j$ is similar, $1\leq j < n.$
\begin{lem} \label{lem:one}
Suppose  $t \in [0,1]$ and $\x\in V(\psi)$.  Then $t\x \in V(\psi)$.
\end{lem}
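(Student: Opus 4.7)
The statement is essentially a linearity observation, so the proof should be very short. The plan is to use the fact that the defining inequality for $V(\psi)$ is homogeneous and linear in $\x$, combined with the fact that scaling by $t \in [0,1]$ contracts toward the origin and hence preserves both membership in the unit cube and the size of $|\q \ip \x|$.

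Concretely, I would proceed in two steps. First, I would verify that $t\x \in [0,1]^n$, which is immediate since $\x \in V(\psi) \subseteq [0,1]^n$ and $t \in [0,1]$ give $0 \leq tx_i \leq x_i \leq 1$ for each coordinate. Second, I would choose the infinitely many $\q \in \Z^n$ witnessing $\x \in V(\psi)$, i.e.\ those with $|\q \ip \x| < \psi(|\q|)$, and observe that for every such $\q$,
\begin{equation*}
|\q \ip (t\x)| = t\,|\q \ip \x| \leq |\q \ip \x| < \psi(|\q|).
\end{equation*}
The same infinite family of $\q$ therefore witnesses $t\x \in V(\psi)$.

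There is no real obstacle here; the statement is a direct consequence of the linearity of $\q \ip \x$ in $\x$ and the bound $t \leq 1$. The only very minor point to note is that one does not need $\psi$ to be slowly decreasing, nor does one need any argument about $|\q|$ changing under the operation, because $\q$ itself is not altered; only the point $\x$ is rescaled. This lemma will presumably be useful in subsequent arguments to propagate the full-measure conclusion from the face $F_n$ into the interior of the cube by taking convex combinations or radial contractions.
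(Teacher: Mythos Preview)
Your proof is correct and is essentially the same as the paper's: both arguments use the same infinite family of $\q$ and the linearity/homogeneity of $\q\ip\x$ together with $0\le t\le 1$ to obtain $|\q\ip(t\x)|=t|\q\ip\x|<\psi(|\q|)$. The only cosmetic difference is that the paper singles out $t=0$ (noting $\mathbf{0}\in V(\psi)$) before treating $t>0$, whereas your chain $t|\q\ip\x|\le|\q\ip\x|<\psi(|\q|)$ handles all $t\in[0,1]$ at once.
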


\begin{proof}
Clearly $\mathbf{0} \in V(\psi)$, so the implication is true for $t=0$. Suppose $t>0$.
By definition of $\x=(x_1,\dots,x_n)\in[0,1]^n$, 
there exist infinitely many $\q$ such that
\begin{equation*}
\vert q_1 x_1 + \dots + q_n x_n \vert < \psi(\vert \q \vert). 
\end{equation*}
Since $\psi(\vert \q \vert) > 0$,
\begin{equation*}
\vert q_1 tx_1 + \dots + q_n tx_n \vert = t\vert q_1 x_1 + \dots + q_n x_n
\vert 
\leq t \psi(\vert \q \vert) < \psi (\vert \q \vert).
\end{equation*}
Hence $t\x \in V(\psi)$ for all $t \in [0,1].$  
\end{proof}
\begin{cor} 
\label{cor:one}
If $\x\in F_j \cap
V(\psi)$, then $t\x \in V(\psi)$.
\end{cor}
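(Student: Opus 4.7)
The plan is to invoke Lemma~\ref{lem:one} directly. The hypothesis $\x\in F_j\cap V(\psi)$ entails in particular $\x\in V(\psi)$, and since $F_j\subseteq[0,1]^n$ the lemma applies without modification to yield $t\x\in V(\psi)$ for every $t\in[0,1]$. No further calculation is required; the corollary is a one-line specialisation of the lemma, and there is no genuine obstacle to overcome.

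The reason for recording the statement is structural rather than technical. Having just shown, for each face $F_j$ of the cube, that $F_j\cap V(\psi)$ contains a translate of $W([0,1]^{1\times(n-1)},[0,1]^{n-1};\psi/C)$ and therefore has full $(n-1)$-dimensional measure, one intends to sweep these full-measure face sets across $[0,1]^n$ by radial scaling. The corollary packages precisely that operation: every radial segment from the origin to a point of $F_j\cap V(\psi)$ lies entirely inside $V(\psi)$. A Fubini-type argument in the following step will then combine the $(n-1)$-dimensional full-measure result on the faces with this radial sweeping in order to upgrade to $|V(\psi)|=1$, completing the divergence case.
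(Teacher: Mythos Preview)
Your proposal is correct and matches the paper's treatment exactly: the corollary is stated immediately after Lemma~\ref{lem:one} with no separate proof, so the paper likewise regards it as an immediate specialisation of that lemma. Your explanatory paragraph about the structural role of the corollary (radial sweeping from the full-measure face sets) also accurately anticipates how the paper uses it in Lemmas~\ref{lem:two} and~\ref{lem:three}.
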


For each $j=1,\dots,n$ let $P_j$ be the `pyramid' with vertex at the
origin and base $F_j$, i.e.
\begin{equation*}
P_j = \{\x \in [0,1]^n : \vert \x \vert = x_j\} = \{t\x: t \in [0,1], \x
\in F_j \}. 
\end{equation*}
Note that $F_j \subset P_j.$  Also for each $U \subseteq [0,1]^{n-1}$, 
write 
\begin{align*}
\W(U;\psi) &=W([0,1]^{n-1},U;\psi)\\
& = \{\mathbf{y} \in U : \Vert \q\ip\mathbf{y}
\Vert < \psi(\vert \q \vert) \mbox{ for infinitely many } \q\}.
\end{align*}
Then by Theorem~\ref{thm:KG}, $\vert \W(U;\psi)\vert=\vert
U\vert$ since~\eqref{eq:critsum} diverges.

Now 
\begin{equation*}
  \vert U \vert \geq \vert F_j \cap ( V(\psi) \cap U ) \vert \geq 
\vert \W(U;\psi/C) \vert 
= \vert U \vert .
\end{equation*}

The characteristic function of the set S will be denoted $\chi_S.$
\begin{lem} 
\label{lem:two}
For each $t\in [0,1]$ and $\x \in [0,1]^n,$
 $$
\chi_{_{V(\psi)\cap P_j}} (t\x) \geq \chi_{_{V(\psi)\cap F_j}} (\x).
$$
\end{lem}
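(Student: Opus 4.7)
The plan is to verify the inequality pointwise by splitting on the value of $\chi_{V(\psi)\cap F_j}(\x)$. If $\chi_{V(\psi)\cap F_j}(\x)=0$ there is nothing to prove, since the left-hand side is always non-negative. So the entire content of the lemma lies in the case $\chi_{V(\psi)\cap F_j}(\x)=1$, i.e.\ $\x \in V(\psi) \cap F_j$, where I need to establish $t\x \in V(\psi) \cap P_j$.

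To do this, I would prove the two membership statements separately. First, $t\x \in V(\psi)$ is immediate from Corollary~\ref{cor:one} (which is just the specialization of Lemma~\ref{lem:one} to points on a face). Second, $t\x \in P_j$ amounts to verifying $|t\x| = (t\x)_j$. This is a direct coordinate check: since $\x \in F_j$ we have $x_j = 1$ and $x_i \in [0,1]$ for every $i$, so $(t\x)_j = t$ while $(t\x)_i = tx_i \leq t$ for $i \neq j$. Taking the maximum gives $|t\x| = t = (t\x)_j$, which is exactly the condition defining $P_j$.

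Combining the two gives $t\x \in V(\psi) \cap P_j$, so $\chi_{V(\psi)\cap P_j}(t\x) = 1$, which matches $\chi_{V(\psi)\cap F_j}(\x) = 1$ and completes the inequality. There is no real obstacle here: the lemma is essentially a repackaging of Corollary~\ref{cor:one} in indicator-function form, set up to be fed into a Fubini-type integration over the pyramids $P_j$ in the next step of the divergence argument.
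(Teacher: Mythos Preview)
Your proposal is correct and follows exactly the same case split as the paper's proof; the only difference is that the paper cites Lemma~\ref{lem:one} directly and leaves the membership $t\x\in P_j$ implicit (it is immediate from the second description $P_j=\{t\x:t\in[0,1],\ \x\in F_j\}$), whereas you invoke Corollary~\ref{cor:one} and verify the coordinate condition $|t\x|=(t\x)_j$ explicitly. These are essentially the same argument.
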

\begin{proof}
If $\chi_{_{V(\psi)\cap F_j}} (\x) = 1$, then $\chi_{_{V(\psi)\cap P_j}} (t\x)
= 1$ by Lemma 1.
If $\chi_{_{V(\psi)\cap F_j}} (\x) = 0$, then $\chi_{_{V(\psi)\cap P_j}} (t\x)
= 0 \mbox{~or~} 1$.
\end{proof}

\begin{lem} \label{lem:three}
For each $j=1,\dots,n,$
 $$ \vert V(\psi) \cap P_j \vert = 1/n. $$
\end{lem}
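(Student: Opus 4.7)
The plan is to use Lemma~\ref{lem:two} together with a scaling parametrisation of the pyramid $P_j$, and bound $|V(\psi)\cap P_j|$ from below by $|V(\psi)\cap F_j|/n$, matching the trivial upper bound $|P_j|=1/n$.

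Concretely, fix $j$ (for definiteness I would take $j=n$ and note that the other cases are analogous by symmetry). I parametrise $P_n$ by the map
\[
\Phi\colon [0,1]\times [0,1]^{n-1} \to P_n, \qquad
(t, y_1,\dots,y_{n-1}) \mapsto (ty_1,\dots,ty_{n-1}, t),
\]
identifying $F_n$ with $[0,1]^{n-1}$ via dropping the last coordinate. A direct expansion of the Jacobian matrix along its last row shows that $|\det D\Phi(t,\mathbf{y})| = t^{n-1}$. In particular $\Phi$ is a measurable bijection off the measure-zero slice $\{t=0\}$, so that
\[
|P_n| \;=\; \int_0^1\!\!\int_{[0,1]^{n-1}} t^{n-1}\,d\mathbf{y}\,dt \;=\; \frac1n.
\]

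Applying the same change of variables to the indicator of $V(\psi)\cap P_n$ gives
\[
|V(\psi)\cap P_n| \;=\; \int_0^1\!\!\int_{[0,1]^{n-1}} \chi_{V(\psi)\cap P_n}(t\x)\,t^{n-1}\,d\mathbf{y}\,dt,
\]
where $\x=(y_1,\dots,y_{n-1},1)\in F_n$. By Lemma~\ref{lem:two} the integrand is at least $\chi_{V(\psi)\cap F_n}(\x)\,t^{n-1}$, so Fubini yields
\[
|V(\psi)\cap P_n| \;\ge\; \int_0^1 t^{n-1}\,dt\cdot |V(\psi)\cap F_n| \;=\; \frac1n\cdot 1 \;=\; \frac1n,
\]
using the fact, established just before Lemma~\ref{lem:one}, that $|V(\psi)\cap F_n|=1$. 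Combined with the obvious upper bound $|V(\psi)\cap P_n|\le |P_n|=1/n$, this gives equality.

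There is no real obstacle here: the content of the lemma is packaged into Lemma~\ref{lem:two} (which lets one pass from the full-measure set $V(\psi)\cap F_j$ up into the pyramid via the scaling action) and into the full-measure statement for the face $F_j$ already proved above. The only technical point is the justification of the polar-type change of variables, but since $\Phi$ is smooth with non-vanishing Jacobian away from the measure-zero set $\{t=0\}$, this is standard.
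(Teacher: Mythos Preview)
Your proof is correct and follows essentially the same route as the paper's: both parametrise $P_n$ by scaling from the face $F_n$, invoke Lemma~\ref{lem:two}, and reduce to the full-measure statement for $V(\psi)\cap F_n$. The only cosmetic difference is that you extract the factor $t^{n-1}$ explicitly as the Jacobian of the map $\Phi$, whereas the paper obtains the same factor by applying the Groshev-type result with $U=[0,t]^{n-1}$ and reading off $|U|=t^{n-1}$.
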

\begin{proof}
Take $j=n.$ Then
\begin{align*}
\vert V(\psi)\cap P_n \vert &=\int_{0}^{1}\int_{0}^{t} \dots
\int_{0}^{t} \chi_{_{V(\psi)\cap P_n}} (tx_1, \dots,tx_{n-1},t) dx_1 \dots
dx_{n-1} dt \\
&\geq \int_{0}^{1} \left[ \int_{0}^{t} \dots
\int_{0}^{t} \chi_{_{V(\psi)\cap F_n}} (\x) dx_1 \dots dx_{n-1} \right] dt 
\end{align*}

Take $U=[0,t]^{n-1}$. Then the inner multiple integral can be evaluated as
\begin{align*}
   \int_{0}^{t} \dots \int_{0}^{t} \chi_{_{V(\psi)\cap F_n}} (\x)
    dx_1 \dots dx_{n-1} & = \vert V(\psi) \cap F_n \cap
  [0,t]^{n-1} \vert \\ & =
  \vert \W([0,t]^{n-1},\psi) \vert \\
  &= \vert [0,t]^{n-1} \vert \\
  &= t^{n-1}.
\end {align*}
Therefore 
\begin{equation*}
\vert V(\psi) \cap P_n \vert \geq \int_{0}^1 t^{n-1} dt = 1/n.
\end{equation*}
The cases for $j \neq n$ are similar. 

It follows that
\begin{align*}
\vert V(\psi) \vert &= \bigcup_{j=1}^n \vert V(\psi) \cap P_j \vert 
\geq \sum_{j=1}^n \vert V(\psi) \cap F_j \vert =n.1/n = 1,
\end{align*}
whence $\vert V(\psi) \vert = 1$ as claimed.  
\end{proof} 
A direct proof would be more satisfactory but there is a similar
configuration of curved lines in the plane for which the analogous
measure result does not hold (see~\cite{Bugeaud}).

Unlike the case of convergence, the arguments in the divergence case
do not extend to the case of more than one linear form.
Now, in~\cite{Dickinson}, H. Dickinson considered the system of linear
forms in the case when $\psi(k) = k^{-\tau}$ and showed that the
Hausdorff dimension is given by
$$
 \dim_{\text H} V = \begin{cases}
      (m-1)n + \frac{m}{\tau +1}, & \tau > (m/n)-1 \\
      mn, & 0 < \tau \leq (m/n)-1.
\end{cases}
$$
The argument establishing the upper bound is essentially the same
as the argument in the case for convergence.  The argument for the
lower bound uses ubiquity and there is a close connection between this
and the divergence case of Khintchine's theorem. However, Dickinson's
approach does not appear to be adaptable in a straightforward manner
to more than one linear form in the divergence case.

Although less powerful, the methods used in this paper are concise and
direct. They can also be used to provide a lower bound for the number
of solutions from the formula for the classical case
(see~\cite{Schmidt} or~\cite[Chap.~1, \S7]{Sprindzuk}) applied to the
$(n-1)$-dimensional faces of the unit cube.  Then using the same
argument as the divergence case to overcome the difficulty that arises
when 
$|\q|=|q_n|$ discussed in \S2.2, it can be verified that the number of solutions
$N(\x,Q)$ satisfies
\begin{equation*}
N(\x,Q) \gg \sum_{k=1}^Qk^{n-2}\psi(k).  
\end{equation*}

\providecommand{\bysame}{\leavevmode\hbox to3em{\hrulefill}\thinspace}

\end{document}